\newtheorem{thrm}{Theorem}[section]
\newtheorem{lem}[thrm]{Lemma}
\theoremstyle{definition}
\numberwithin{equation}{section}
\newcommand{\N}{\ensuremath{{\mathbb N}}}
\newcommand{\R}{\ensuremath{{\mathbb R}}}
\newcommand{\e}{\varepsilon}
\newcommand{\AveP}{\underset{\pi}{\mbox{Ave}}}
\newcommand{\abs}[1]{\left\lvert#1 \right\rvert}
\newcommand{\norm}[1]{\left \lVert#1 \right\rVert}
\newcommand{\skp}[1]{\left<#1\right>}
\author{Joscha Prochno}
\address[Joscha Prochno]{Department of Mathematical and Statistical Sciences\\
University of Alberta\\
505 Central Academic Building\\
Edmonton T6G 2G1\\
Canada}
\email{prochno@ualberta.ca}
\thanks{~}
\keywords{Orlicz space, Musielak-Orlicz space, Combinatorial Inequality}
\subjclass{Primary 39B82; Secondary 44B20, 46C05}
\begin{document}

\title[Combinatorial Approach to Musielak-Orlicz Spaces]{Combinatorial Approach to Musielak-Orlicz Spaces}

\begin{abstract}
In this paper we show that, using combinatorial inequalities and Matrix-Averages, we can generate Musielak-Orlicz spaces, {\it i.e.}, we prove that $\AveP \max\limits_{1 \leq i \leq n} \abs{x_i y_{i\pi(i)}} \sim \norm{x}_{\Sigma M_i}$, where the Orlicz functions $M_1,\ldots,M_n$ depend on the matrix $(y_{ij})_{i,j=1}^n$. We also provide an approximation result for Musielak-Orlicz norms which already in the case of Orlicz spaces turned out to be very useful.
\end{abstract}
\maketitle

\section{Introduction} \label{sect1}

Understanding the structure of the classical Banach space $L_1$ is an important goal of Banach Space Theory, since this space naturally appears in various areas of mathmatics, {\it e.g.}, Functional Analysis, Harmonic Analysis and Probability Theory. One way to do this is to study the ``local'' properties of a given space, {\it i.e.}, the finite-dimensional subspaces, which on the other hand bears information about the ``global'' structure.

In \cite{key-K-S1} and \cite{key-K-S2}, Kwapie\'n and Sch\"utt proved several combinatorial and probabilistic inequalities and used them to study invariants of Banach spaces and finite-dimensional subspaces of $L_1$. Among other things, they considered for $x,y\in\R^n$ 
  $$
    \AveP \max_{1\leq i \leq n}\abs{x_i y_{\pi(i)}},
  $$
and gave the order of the combinatorial expression in terms of an Orlicz norm of the vector $x$. In fact, this is not only a main ingredient to prove that every finite-dimensional symmetric subspace of $L_1$ is $C$-isomorphic to an average of Orlicz spaces (see \cite{key-K-S1}), but also to show that an Orlicz space with a $2$-concave Orlicz function is isomorphic to a subspace of $L_1$ (see \cite{key-Sch1}). 
Here, we are going to generalize these results and consider combinatorial Matrix-Averages, {\it i.e.}, 
  \begin{equation}\label{EQU matrix average}
    \AveP \max_{1 \leq i \leq n} \abs{x_i y_{i\pi(i)}}, 
  \end{equation}
with $x\in\R^n$, $y\in\R^{n\times n}$, and express their order in terms of Musielak-Orlicz norms. The new approach is to average over matrices instead of just vectors. This corresponds to the idea of considering random variables that are not necessary identically distributed. In fact, using this idea one can also generalize the results from \cite{key-GLSW} to the case of Musielak-Orlicz spaces. 
We prove that
  $$
    C_1 \norm{x}_{\Sigma M^*_i} \leq \AveP \max_{1 \leq i \leq n} \abs{x_i y_{i\pi(i)}} \leq C_2 \norm{x}_{\Sigma M^*_i},
  $$
where $C_1,C_2>0$ are absolute constants and the dual Orlicz functions $M^*_1,\ldots,M^*_n$ depend on $y\in\R^{n\times n}$. In Section $4$, we also provide the converse result, {\it i.e.}, given Orlicz functions $M_1,\ldots,M_n$, we show which matrix $y\in\R^{n\times n}$ yields the equivalence of (\ref{EQU matrix average}) to the corresponding Musielak-Orlicz norm $\norm{\cdot}_{\Sigma M_i^*}$.
In the last section we prove an approximation results for Musielak-Orlicz norms. In applications, a corresponding results for Orlicz norms turned out to be quite fruitful and simplified calculations (see \cite{key-GLSW}).

However, these Musielak-Orlicz norms are generalized Orlicz norms in the sense that one considers a different Orlicz function in each component. Since one can use the combinatorial results in \cite{key-K-S1}, \cite{key-K-S2} to study embeddings of Orlicz and Lorentz spaces into $L_1$ (see \cite{key-Pro}, \cite{key-Sch1}, \cite{key-Sch2}), the results we obtain can be seen as a point of departure to obtain embedding theorems for more general classes of finite-dimensional Banach spaces into $L_1$, {\it e.g.}, Musielak-Orlicz spaces. This, on the other hand, is crucial to extend the understanding of the geometric properties of $L_1$.

\section{Preliminaries}

A convex function $M:[0,\infty) \to [0,\infty)$ with $M(0)=0$ and $M(t)>0$ for $t>0$ is called an Orlicz function. 
Given an Orlicz function $M$ we define its conjugate or dual function $M^*$ by the Legendre-Transform
  $$
    M^*(x) = \sup_{t\in[0,\infty)}(xt-M(t)).
  $$
 Again, $M^*$ is an Orlicz function and $M^{**}=M$, which yields that an Orlicz function $M$ is uniquely determined by the dual function $M^*$. For instance, taking $M(t)=\frac{1}{p}t^p$, $p\geq 1$, the dual function is given by $M^*(t)=\frac{1}{p^*}t^{p^*}$ with $\frac{1}{p^*}+\frac{1}{p}=1$. 
We define the $n$-dimensional Orlicz space $\ell^n_M$ to be $\R^n$ equipped with the norm
  $$
    \norm{x}_M = \inf\left\{ \rho>0 : \sum_{i=1}^n M\left(\frac{\abs{x_i}}{\rho}\right) \leq 1  \right\}.
  $$
Notice that to each decreasing sequence $y_1\geq \ldots \geq y_n > 0$ there corresponds an Orlicz function $M:=M_y$ via 
  $$
    M\left(\sum_{i=1}^k y_i\right) = \frac{k}{n}, ~ k=1,\ldots,n,
  $$ 
and where the function $M$ is extended linearly between the given values.\\  
Let $M_1,\ldots,M_n$ be Orlicz functions. We define the $n$-dimensional Musielak-Orlicz space $\ell^n_{\Sigma M_i}$ to be the space $\R^n$ equipped with the norm
  $$
    \norm{x}_{\Sigma M_i} = \inf \left\{ \rho>0 : \sum_{i=1}^n M_i\left( \frac{\abs{x_i}}{\rho} \right) \leq 1 \right\}.
  $$
These spaces can be considered as generalized Orlicz spaces. One can easily show (see \cite{key-Pro}, Lemma 7.3), using Young's inequality, that the norm of the dual space $(\ell^n_{\Sigma M_i})^*$ is equivalent to
  $$
    \norm{x}_{\Sigma M_i^*} = \inf \left\{ \rho>0 : \sum_{i=1}^n M_i^*\left( \frac{\abs{x_i}}{\rho} \right) \leq 1 \right\},
  $$
which is the analog result as for the classical Orlicz spaces. To be more precise, we have $\norm{\cdot}_{\Sigma M_i^*} \leq \norm{\cdot}_{\left( \Sigma M_i\right)^*} \leq 2\norm{\cdot}_{\Sigma M_i^*}$. A more detailed and thorough introduction to Orlicz spaces can be found in \cite{key-KR} and \cite{key-RR}.

We will use the notation $a\sim b$ to express that there exist two positive absolute constants $c_1, c_2$ such that $c_1a\leq b\leq c_2 a$. The letters 
$c,C,C_1,C_2,\ldots$ will denote positive absolute constants, whose value may change from line to line. By $k,m,n$ we will denote natural numbers.

In the following, $\pi$ is a permutation of $\{1,\ldots,n\}$ and we write $\AveP$ to denote the average over all permutations in the group 
$\mathfrak{S}_n$, {\it i.e.}, $\AveP:=\frac{1}{n!}\sum_{\pi\in\mathfrak{S}_n}$.\\

We need the following result from \cite{key-K-S1}.

\begin{thrm} [\cite{key-K-S1} Theorem 1.1] \label{THM_Ausgangspunkt}
  Let $n\in\N$ and $y=(y_{ij})_{i,j=1}^n\in\R^{n\times n}$ be a real $n\times n$ matrix. Then
    $$
      \frac{1}{2n} \sum_{k=1}^n s(k) \leq \AveP \max_{1 \leq i \leq n} \abs{y_{i\pi(i)}}
      \leq \frac{1}{n} \sum_{k=1}^n s(k),
    $$
  where $s(k)$, $k=1,\ldots,n^2$, is the decreasing rearrangement of $\abs{y_{ij}}$, $i,j=1,\ldots,n$.
\end{thrm}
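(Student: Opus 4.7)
The plan is to sort the entries of $y$ by magnitude and decompose $\AveP \max$ according to which position a random $\pi$ hits first. Enumerate $s(1) \geq s(2) \geq \cdots \geq s(n^2)$ with $s(k) = \abs{y_{i_k j_k}}$, and for each $\pi \in \mathfrak{S}_n$ set $K(\pi) = \min\{k : \pi(i_k) = j_k\}$, which is well-defined since $\pi$ visits exactly $n$ positions. Then $\max_i \abs{y_{i\pi(i)}} = s(K(\pi))$, so writing $p_k = \Pr_\pi[K(\pi) = k]$ one obtains the basic identity
\[
\AveP \max_{1 \leq i \leq n} \abs{y_{i\pi(i)}} = \sum_{k=1}^{n^2} s(k)\, p_k,
\]
subject to $\sum_k p_k = 1$ and $p_k \leq \Pr[\pi(i_k) = j_k] = 1/n$.

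For the upper bound the task reduces to a rearrangement exercise: the linear form $\sum_k s(k)p_k$, with decreasing weights $s(k)$, $p_k \in [0, 1/n]$, and total mass $1$, is maximised by the choice $p_1 = \cdots = p_n = 1/n$, yielding $\frac{1}{n}\sum_{k=1}^n s(k)$. A one-line argument: letting $q_k := p_k - \frac{1}{n}\mathbf{1}[k \leq n]$, one has $\sum_k q_k = 0$ with $q_k \leq 0$ on $k \leq n$ and $q_k \geq 0$ on $k > n$, so $\sum_k s(k) q_k \leq s(n) \sum_k q_k = 0$.

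For the lower bound the key step is a Bonferroni estimate on $p_k$:
\[
p_k \;\geq\; \frac{1}{n} \;-\; \sum_{\ell < k} \Pr\bigl[\pi(i_k)=j_k,\ \pi(i_\ell)=j_\ell\bigr].
\]
Each joint probability vanishes if the two entries share a row or column (since $\pi$ is a bijection) and otherwise equals $1/(n(n-1))$, so $p_k \geq 1/n - (k-1)/(n(n-1)) = (n-k)/(n(n-1))$ for $k \leq n$. Substituting and swapping sums produces
\[
\AveP \max_{1 \leq i \leq n}\abs{y_{i\pi(i)}} \;\geq\; \frac{1}{n(n-1)} \sum_{j=1}^{n-1} T_j, \qquad T_j := \sum_{k=1}^j s(k),
\]
and the monotonicity of $s$ forces $T_j/j \geq T_n/n$, whence $\sum_{j=1}^{n-1} T_j \geq T_n(n-1)/2$ and the desired bound $\frac{1}{2n}\sum_{k=1}^n s(k)$ follows. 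The only slightly delicate point is this Bonferroni step: one must exploit the row/column incompatibility to annihilate many joint events and keep the count at exactly $k-1$ terms, which is precisely what yields the clean constant $1/(2n)$ rather than a weaker one.
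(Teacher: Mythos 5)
Your argument is correct. Note first that the paper itself gives no proof of this statement: it is imported verbatim as Theorem 1.1 of Kwapie\'n and Sch\"utt, so there is nothing internal to compare against; your write-up is essentially the classical argument from that source, organized around the distribution of the first (largest) sorted entry hit by $\pi$. Every step checks out: $\max_i\abs{y_{i\pi(i)}}=s(K(\pi))$ because the graph of $\pi$ meets exactly $n$ cells and $s$ is decreasing; the upper bound is a clean rearrangement argument under the constraints $\sum_k p_k=1$, $p_k\le 1/n$; and the Bonferroni step gives $p_k\ge \frac{n-k}{n(n-1)}$, after which the interchange $\sum_{k=1}^n s(k)(n-k)=\sum_{j=1}^{n-1}T_j$ and the Chebyshev-type inequality $T_j/j\ge T_n/n$ deliver exactly the constant $\frac{1}{2n}$. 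Two minor remarks: (i) your closing comment overstates the role of the row/column incompatibility --- for the lower bound you only need each joint probability to be at most $1/(n(n-1))$, and the vanishing terms merely help, so no delicate counting is required; (ii) the expression $1/(n(n-1))$ is vacuous for $n=1$, where the statement is trivially true, so you should either exclude that case or note it separately.
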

  
\section{Combinatorial Generation of Musielak-Orlicz Spaces}

We will prove that a Matrix-Average, in fact, yields a Musielak-Orlicz norm. Following \cite{key-K-S1}, we start with a structural lemma.

\begin{lem}\label{LEM_MusielakOrlicz}
  Let $y=(y_{ij})_{i,j=1}^n\in\R^{n\times n}$ be a real $n\times n$ matrix with $y_{i1} \geq \ldots \geq y_{in}>0$ and
  $\sum_{j=1}^n y_{ij}=1$ for all
  $i=1,\ldots,n$. Let $M_i$, $i=1,\ldots,n$, be convex functions with
    \begin{equation}\label{EQU_Mi}
      M_i \left( \sum_{j=1}^k y_{ij} \right) = \frac{k}{n}, ~k=1,\ldots,n.
    \end{equation}
  Furthermore, let
    $$
      B_{\Sigma M_i} = \left\{ x\in\R^n \,:\, \sum_{i=1}^n M_i(\abs{x_i}) \leq 1 \right\}
    $$
  and
    $$
      B=\hbox{convexhull} \left\{ \left(\e_i \sum_{j=1}^{\ell_i} y_{ij} \right)_{i=1}^n \,:\, \sum_{i=1}^n\ell_i \leq n
      , \e_i=\pm 1, i=1,\ldots,n \right\}.
    $$
  Then, we have
    $$
      B \subset B_{\Sigma M_i} \subset 3 B.
    $$
\end{lem}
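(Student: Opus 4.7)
The plan is to verify the two inclusions separately, with the left one being essentially definitional and the right one carrying all the content.

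For $B \subset B_{\Sigma M_i}$, since the right-hand set is convex (the unit ball of a norm), it suffices to check the extreme points. For an extreme point $(\e_i\sum_{j=1}^{\ell_i} y_{ij})_{i=1}^n$ with $\sum_i \ell_i \leq n$, the defining property (\ref{EQU_Mi}) gives
$$
\sum_{i=1}^n M_i\left(\sum_{j=1}^{\ell_i} y_{ij}\right) = \sum_{i=1}^n \frac{\ell_i}{n} \leq 1,
$$
so the extreme point lies in $B_{\Sigma M_i}$.

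For $B_{\Sigma M_i} \subset 3B$, I would fix $x \in B_{\Sigma M_i}$, set $\e_i := \mathrm{sgn}(x_i)$, and decompose each $\abs{x_i}$ along the breakpoints of $M_i$: let $\ell_i \in \{0,1,\ldots,n\}$ be maximal with $a_i := \sum_{j=1}^{\ell_i} y_{ij} \leq \abs{x_i}$, and put $b_i := \abs{x_i} - a_i \in [0, y_{i,\ell_i+1}]$ (with $b_i = 0$ when $\ell_i = n$). Since $M_i$ is nondecreasing with $M_i(a_i) = \ell_i/n$, the hypothesis $\sum_i M_i(\abs{x_i}) \leq 1$ forces $\sum_i \ell_i \leq n$, so the ``bulk'' vector $(\e_i a_i)_{i=1}^n$ is an extreme point of $B$.

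To place the residual $(\e_i b_i)_{i=1}^n$ inside $B$ I would use a solidity argument. The choice $\ell_i \equiv 1$ makes the constraint tight, $\sum_i \ell_i = n$, so each of the $2^n$ sign patterns $(\e_i y_{i,1})_{i=1}^n$ is an extreme point of $B$; their convex hull is exactly the rectangle $\prod_{i=1}^n [-y_{i,1}, y_{i,1}]$, which therefore sits inside $B$. Since the rows of $y$ are nonincreasing, $\abs{b_i} \leq y_{i,\ell_i+1} \leq y_{i,1}$, so $(\e_i b_i)_{i=1}^n$ lies in this rectangle and hence in $B$. Adding, $x = (\e_i a_i)_{i=1}^n + (\e_i b_i)_{i=1}^n \in B + B \subset 2B \subset 3B$.

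The only genuine obstacle is the solidity observation $\prod_i [-y_{i,1}, y_{i,1}] \subset B$, which rests on the critical fact that $\ell_i \equiv 1$ already exhausts the constraint $\sum_i \ell_i \leq n$. Once this is in hand the residual term is essentially free; in fact this route yields the constant $2$ rather than $3$, so the stated bound has slack.
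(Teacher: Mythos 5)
Your proof is correct, and in the hard direction it takes a genuinely different (and slightly sharper) route than the paper. The paper splits the index set into $J=\{i: M_i(\abs{x_i})>1/n\}$ and $I=\{i: M_i(\abs{x_i})\le 1/n\}$, puts $x_I$ into $B$ by domination by $(y_{11},\dots,y_{n1})$, and handles $x_J$ multiplicatively: it rounds $M_i(x_i)$ into $[k_i/n,(k_i+1)/n]$, shows the vector $z_J$ with entries $\sum_{j=1}^{k_i}y_{ij}$ lies in $B$, and uses the monotonicity of the rows to get $x_J\le w_J\le 2z_J$, hence $x_J\in 2B$ and $x\in 3B$. You instead decompose additively and uniformly in each coordinate, $\abs{x_i}=a_i+b_i$ with $a_i=\sum_{j=1}^{\ell_i}y_{ij}$ maximal below $\abs{x_i}$; the same counting argument gives $\sum_i\ell_i\le n$ so the bulk is a generator of $B$, and the residual satisfies $b_i\le y_{i,\ell_i+1}\le y_{i1}$ and so lands in the box $\prod_i[-y_{i1},y_{i1}]\subset B$ (the same solidity fact the paper uses implicitly for $x_I$, which you rightly isolate as the one point needing justification). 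This avoids the doubling step entirely and yields $B_{\Sigma M_i}\subset 2B$, improving the paper's constant $3$ to $2$; it also propagates to slightly better constants in Theorem \ref{THM_Erzeugung_Musielak_Orlicz_Norm}. Two cosmetic remarks: the points $(\e_i a_i)_i$ and $(\e_i y_{i1})_i$ are generators of the convex hull, not necessarily extreme points, but membership in $B$ is all you use; and your parenthetical $b_i=0$ when $\ell_i=n$ deserves the one\nobreakdash-line justification that $M_i(\abs{x_i})\le 1=M_i(1)$ together with convexity and $M_i(0)=0$ forces $\abs{x_i}\le 1=\sum_{j=1}^n y_{ij}$.
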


\begin{proof}
  We start with the left inclusion:\\
  We have
    $$
      \sum_{i=1}^n M_i \left( \abs{\e_i \sum_{j=1}^{\ell_i} y_{ij}} \right)
      = \sum_{i=1}^n M_i \left( \sum_{j=1}^{\ell_i} y_{ij} \right)
      = \sum_{i=1}^n \frac{\ell_i}{n} \leq 1.
    $$
  Therefore, $B \subset B_{\Sigma M_i}$.\\
  Now the right inclusion:\\
  W.l.o.g. let
    $$
      \sum_{i=1}^n M_i (\abs{x_i}) = 1,
    $$
  {\it i.e.}, $x\in B_{\Sigma M_i}$ and $x_1 \geq \ldots \geq x_n \geq 0$. Furthermore, let $J,I \subset \{1,\ldots,n\}$ indexsets with 
  $I\cap J= \emptyset$ s.t.
    $$
      x = x_J + x_I, ~~~x_J,x_I\in\R^n,
    $$
  where we choose $J$ s.t.
    $$
      M_i(x_i) > \frac{1}{n} ~~~\hbox{for all $i\in J$}
    $$
  and $I$ s.t.
    $$
      M_i(x_i) \leq \frac{1}{n} ~~~\hbox{for all $i\in I$}.
    $$
  Let $\abs{J}=r$ and thus $\abs{I}=n-r$. We complete the vectors $x_J$ and $x_I$ in the other components with zeros. We disassemble $x$ in two
  vectors such that the associated Orlicz functions $M_i$ are greater $1/n$ and on the other segment less or equal to $1/n$. By our requirement 
  we have 
    $$
      M_i(y_{i1}) = \frac{1}{n} ~~~\hbox{for all $i=1,\ldots,n$.}
    $$
  Therefore, $x_I \leq (y_{11},\ldots,y_{n1})$, since $M_i(x_i) \leq \frac{1}{n} = M_i(y_{i1})$ for all $i\in I$. We have
  $(y_{11},\ldots,y_{n1})\in B$, which follows immediately from the choice of $\ell_i=1, \e_i=1$ for all $i=1,\ldots,n$, and therefore finally $x_I\in B$.
  It is left to show that $x_J \in 2B$. For each $i\in J$ there exists a $k_i \geq 1$ with
    \begin{equation}\label{EQU_Mi_zwischen_ki_durch_n}
      \frac{k_i}{n} \leq M_i(x_i) \leq \frac{k_i+1}{n}.
    \end{equation}
  Summing up all $i\in J$, we obtain by (\ref{EQU_Mi}) and (\ref{EQU_Mi_zwischen_ki_durch_n})
    $$
      \sum_{i\in J} \frac{k_i}{n} \stackrel{(\ref{EQU_Mi})}{=} \sum_{i\in J} M_i \left( \sum_{i=1}^{k_i} y_{ij} \right)
      \stackrel{(\ref{EQU_Mi_zwischen_ki_durch_n})}{\leq} \sum_{i\in J} M_i(x_i) \leq 1.
    $$
  Now, let $z_J\in\R^n$ be the vector with the entries $\sum_{j=1}^{k_i} y_{ij}$ at the points $i\in J$ and zeros elsewhere.
  Then, we have $z_J\in B$, because $\sum_{i\in J} k_i \leq n$. Let $w_J\in\R^n$ be the vector with the entries $\sum_{j=1}^{k_i+1} y_{ij}$ at the
  points $i\in J$ and zeros elsewhere. We have $2z_J \geq w_J$, because $y_{ij}$ is decreasing in $j$ and therefore
  $y_{ik_i+1}$ can be estimated by $\sum_{j=1}^{k_i}y_{ij}$. Furthermore, we have for all $i\in J$
    $$
      \sum_{j=1}^{k_i+1} y_{ij} \geq x_i,
    $$
  since
    $$
      M_i(x_i) \stackrel{(\ref{EQU_Mi_zwischen_ki_durch_n})}{\leq} \frac{k_i+1}{n}
      = M_i \left( \sum_{j=1}^{k_i+1} y_{ij} \right) ~~~\hbox{for all $i\in J$}.
    $$
  Hence, $2z_J \geq x_J$ and thus $x_J \in 2B$. Altogether, we obtain
    $$
      x=x_J + x_I \in 3B.
    $$
\end{proof}

Note that the condition $\sum_{j=1}^n y_{ij}=1$ is just a matter of normalization, so that we have normalized Orlicz functions with $M_i(1)=1$, and therefore can be omitted. In addition, replacing the conditions (\ref{EQU_Mi}) by
  $$
    M_i^* \left( \sum_{j=1}^k y_{ij} \right) = \frac{k}{n}, ~k=1,\ldots,n,
  $$
yields the result for the dual balls. However, from this lemma we can deduce that our combinatorial expression generates a Musielak-Orlicz norm.

\begin{thrm}\label{THM_Erzeugung_Musielak_Orlicz_Norm}
  Let $y=(y_{ij})_{i,j=1}^n\in\R^{n\times n}$. Let the assumptions be as in Lemma \ref{LEM_MusielakOrlicz}. Then, for every $x\in\R^n$,
    $$
      \frac{1}{6n} \norm{x}_{\Sigma M^*_i} \leq
      \AveP \max_{1 \leq i \leq n} \abs{x_i y_{i\pi(i)}} \leq \frac{2}{n} \norm{x}_{\Sigma M^*_i},
    $$
   where $M_i$, $i=1,\ldots,n$ are given by formula (\ref{EQU_Mi}).
\end{thrm}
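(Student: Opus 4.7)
The plan is to combine Theorem~\ref{THM_Ausgangspunkt}, applied to the rescaled matrix $(\abs{x_i}\,y_{ij})_{i,j=1}^n$, with the polar version of Lemma~\ref{LEM_MusielakOrlicz}. Since $y_{ij}\geq 0$, one has $\max_i\abs{x_iy_{i\pi(i)}}=\max_i\abs{x_i}\,y_{i\pi(i)}$, so that Theorem~\ref{THM_Ausgangspunkt} applied to $(\abs{x_i}y_{ij})$ immediately gives
$$
\frac{1}{2n}\sum_{k=1}^n s_x(k)\ \leq\ \AveP\max_{1\leq i\leq n}\abs{x_iy_{i\pi(i)}}\ \leq\ \frac{1}{n}\sum_{k=1}^n s_x(k),
$$
where $s_x(k)$ denotes the decreasing rearrangement of the $n^2$ numbers $\abs{x_i}y_{ij}$. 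Thus it suffices to establish the equivalence $\sum_{k=1}^n s_x(k)\sim\norm{x}_{\Sigma M_i^*}$.

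The key observation, which is the main combinatorial content, is that this sum is precisely the support function of the body $B$ from Lemma~\ref{LEM_MusielakOrlicz} evaluated at $x$. Indeed, since each row $y_{i1}\geq\cdots\geq y_{in}$ is already in decreasing order, selecting the $n$ largest entries of $(\abs{x_i}y_{ij})$ amounts to choosing nonnegative integers $\ell_i$ with $\sum_i\ell_i\leq n$ and taking the leading block $y_{i1},\ldots,y_{i\ell_i}$ from row $i$. Hence
$$
\sum_{k=1}^n s_x(k)\ =\ \max_{\substack{\ell_i\geq 0\\ \sum_i\ell_i\leq n}}\sum_{i=1}^n\abs{x_i}\sum_{j=1}^{\ell_i}y_{ij}\ =\ \sup_{v\in B}\skp{x,v},
$$
the last identity obtained by optimizing the signs $\e_i=\pm 1$ of the extreme points of $B$ to match those of the $x_i$.

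With this identification in hand, polar duality finishes the argument. Taking polars in $B\subset B_{\Sigma M_i}\subset 3B$ reverses the inclusions into $\tfrac{1}{3}B^\circ\subset(B_{\Sigma M_i})^\circ\subset B^\circ$, and reading this off as support functions of the original bodies gives
$$
\sup_{v\in B}\skp{x,v}\ \leq\ \sup_{v\in B_{\Sigma M_i}}\skp{x,v}\ \leq\ 3\sup_{v\in B}\skp{x,v}.
$$
The middle quantity equals $\norm{x}_{(\Sigma M_i)^*}$, and the preliminary estimate $\norm{\cdot}_{\Sigma M_i^*}\leq\norm{\cdot}_{(\Sigma M_i)^*}\leq 2\norm{\cdot}_{\Sigma M_i^*}$ recorded in Section~2 then trades the true dual norm for the Musielak-Orlicz norm $\norm{x}_{\Sigma M_i^*}$. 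Chaining the Kwapie\'n--Sch\"utt constants $\tfrac{1}{2n}$ and $\tfrac{1}{n}$ with the polarity factor $3$ and the Luxemburg factor $2$ yields the stated bounds $\tfrac{1}{6n}$ and $\tfrac{2}{n}$. The only step that genuinely requires care is the second paragraph -- recognizing the sum of the top $n$ entries as the support function of $B$ -- since it is what links the rearrangement bound of Theorem~\ref{THM_Ausgangspunkt} to the geometric content of Lemma~\ref{LEM_MusielakOrlicz}; the rest is mechanical polar bookkeeping.
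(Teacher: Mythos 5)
Your proposal is correct and follows essentially the same route as the paper: apply Theorem~\ref{THM_Ausgangspunkt} to the matrix $(\abs{x_i}y_{ij})$, recognize $\sum_{k=1}^n s(k)$ as the support function of $B$ at $x$, and then use the inclusions $B\subset B_{\Sigma M_i}\subset 3B$ of Lemma~\ref{LEM_MusielakOrlicz} together with the equivalence $\norm{\cdot}_{\Sigma M_i^*}\leq\norm{\cdot}_{(\Sigma M_i)^*}\leq 2\norm{\cdot}_{\Sigma M_i^*}$. The constants chain exactly as in the paper's argument, so nothing further is needed.
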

\begin{proof}
 By Theorem \ref{THM_Ausgangspunkt}
    $$
      \frac{1}{2n} \sum_{k=1}^n s(k) \leq \AveP \max_{1 \leq i \leq n} \abs{x_i y_{i\pi(i)}} \leq \frac{1}{n} \sum_{k=1}^n s(k),
    $$
  where $s(k)$, $k=1,\ldots,n^2$, is the decreasing rearrangement of $\abs{x_i y_{ij}}$, $i,j=1,\ldots,n$.
  Rewriting the expression gives
    $$
      \sum_{k=1}^n s(k) = \sum_{i=1}^n \sum_{j=1}^{\ell_i} x_i y_{ij} = \sum_{i=1}^n x_i \sum_{j=1}^{\ell_i} y_{ij},
    $$
  where $\ell_i$, $i=1,\ldots,n$ are chosen to maximize the upper sum and satisfy $\sum_{i=1}^n \ell_i \leq n$. We have
    $$
      \sum_{i=1}^n x_i \sum_{j=1}^{\ell_i} y_{ij} = \skp{x,\left(\sum_{j=1}^{\ell_i} y_{ij}\right)_{i=1}^n}.
    $$
  Now, taking the supremum over all $z\in B_{\Sigma M_i}$ instead of the supremum over all elements of $B$, and using the fact that by Lemma \ref{LEM_MusielakOrlicz} $B \subset B_{\Sigma M_i}$, we get
   $$
      \AveP \max_{1 \leq i \leq n} \abs{x_i y_{i\pi(i)}} \leq \frac{1}{n} \norm{x}_{\left(\Sigma M_i\right)^*}.
   $$
 As mentioned above, we have that $\norm{\cdot}_{\left(\Sigma M_i\right)^*} \leq 2 \norm{\cdot}_{\Sigma M_i^*}$ and hence
   $$
      \AveP \max_{1 \leq i \leq n} \abs{x_i y_{i\pi(i)}} \leq \frac{2}{n} \norm{x}_{\Sigma M_i^*}.
   $$
 Similarly, now using the fact that by Lemma \ref{LEM_MusielakOrlicz} $\frac{1}{3} B_{\Sigma M_i} \subset B$ and that $\norm{\cdot}_{\Sigma M_i^*} \leq \norm{\cdot}_{\left(\Sigma M_i \right)^*}$, we obtain
   $$
       \AveP \max_{1 \leq i \leq n} \abs{x_i y_{i\pi(i)}} \geq \frac{1}{6n} \norm{x}_{\Sigma M_i^*}.
   $$
\end{proof}

If we choose a slightly different normalization as in the beginning, we obtain the following version of the theorem.

\begin{thrm} \label{THM Erzeugung von Musielak-Orlicz-Normen ueber Permutationen}
  Let $y=(y_{ij})_{i,j=1}^n$ be a real $n\times n$ matrix with $y_{i1} \geq \ldots \geq y_{in}$, $i=1,\ldots,n$. Let $M_i$, $i=1,\ldots,n$, be
  Orlicz functions with
    \begin{equation}\label{EQU_Mi_2}
      M_i \left( \frac{1}{n} \sum_{j=1}^k y_{ij} \right) = \frac{k}{n}, ~k=1,\ldots,n.
    \end{equation} Then, for every $x\in\R^n$,
    $$
      \frac{1}{6} \norm{x}_{\Sigma M^*_i} \leq
      \AveP \max_{1 \leq i \leq n} \abs{x_i y_{i\pi(i)}} \leq 2 \norm{x}_{\Sigma M^*_i}.
    $$
\end{thrm}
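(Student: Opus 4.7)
The plan is to reduce the statement to Theorem~\ref{THM_Erzeugung_Musielak_Orlicz_Norm} by a simple rescaling of the matrix. Setting $\tilde y_{ij} := y_{ij}/n$, the defining relation (\ref{EQU_Mi_2}) becomes
\begin{equation*}
M_i\Bigl(\sum_{j=1}^k \tilde y_{ij}\Bigr) = \frac{k}{n}, \qquad k=1,\ldots,n,
\end{equation*}
which is exactly condition (\ref{EQU_Mi}) applied to $\tilde y$ and the same Orlicz functions $M_1,\ldots,M_n$. The rows of $\tilde y$ remain non-increasing, and by the remark immediately following Lemma~\ref{LEM_MusielakOrlicz} the normalization $\sum_j y_{ij}=1$ used there (and hence in Theorem~\ref{THM_Erzeugung_Musielak_Orlicz_Norm}) is cosmetic and may be dropped, since the proof of the lemma only uses (\ref{EQU_Mi}) and the monotonicity of the rows. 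Thus Theorem~\ref{THM_Erzeugung_Musielak_Orlicz_Norm} applies to $\tilde y$.

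Applying that theorem to $\tilde y$ yields
\begin{equation*}
\frac{1}{6n}\norm{x}_{\Sigma M_i^*} \;\le\; \AveP \max_{1\le i\le n} \abs{x_i \tilde y_{i\pi(i)}} \;\le\; \frac{2}{n}\norm{x}_{\Sigma M_i^*}.
\end{equation*}
Since the scalar factor $1/n$ pulls out of both the maximum and the averaging over $\pi$, the middle expression equals $\tfrac{1}{n}\AveP \max_i \abs{x_i y_{i\pi(i)}}$; multiplying the whole chain through by $n$ produces exactly the claimed inequality. Crucially, the dual norm $\norm{\cdot}_{\Sigma M_i^*}$ is identical on both sides of the computation because only the matrix has been rescaled while the Orlicz functions $M_i$ themselves are unchanged.

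The argument contains no genuine analytic difficulty; the only point that requires care is the bookkeeping of the rescaling, namely that passing from $y$ to $\tilde y = y/n$ produces a clean factor $1/n$ in the combinatorial average while leaving $\norm{x}_{\Sigma M_i^*}$ untouched. Once this identification is made, the inequalities from Theorem~\ref{THM_Erzeugung_Musielak_Orlicz_Norm} close the proof immediately, and the slightly more convenient normalization encoded in (\ref{EQU_Mi_2}) is precisely what absorbs the $1/n$ into the constants $1/6$ and $2$.
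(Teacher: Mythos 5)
Your rescaling argument is correct and is exactly what the paper intends: the paper gives no separate proof of this theorem, merely noting that it follows from Theorem~\ref{THM_Erzeugung_Musielak_Orlicz_Norm} by "a slightly different normalization," which is precisely the substitution $\tilde y_{ij}=y_{ij}/n$ you carry out. Your version has the small virtue of making explicit that the factor $1/n$ pulls out of the average and cancels against the $1/n$ in the constants, while the $M_i$ and hence $\norm{\cdot}_{\Sigma M_i^*}$ are unchanged.
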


Again, if we assume 
$$
  M_i^* \left( \frac{1}{n} \sum_{j=1}^k y_{ij} \right) = \frac{k}{n}, ~~~k=1,\ldots,n.
$$
instead of condition (\ref{EQU_Mi_2}), we obtain
 $$
   \AveP \max_{1 \leq i \leq n} \abs{x_i y_{i\pi(i)}} \sim \norm{x}_{\Sigma M_i}.
 $$

 \section{The Converse Result}
 
We will now prove a converse to Theorem \ref{THM Erzeugung von Musielak-Orlicz-Normen ueber Permutationen}, {\it i.e.}, given a Musielak-Orlicz norm, and therefore Orlicz functions $M_i$, $i=1,\ldots,n$, we show how to choose the matrix $y=(y_{ij})_{i,j=1}^n$ to generate the given Musielak-Orlicz-Norm $\norm{\cdot}_{\Sigma M^*_i}$.

\begin{thrm}
  Let $n\in\N$ and let $M_i$, $i=1,\ldots,n$, be Orlicz functions. Then
    \begin{eqnarray*}
      C_1 \norm{x}_{\Sigma M^*_i} & \leq &
      \AveP \max_{1 \leq i \leq n} \abs{x_i \cdot n\cdot\left(M_i^{-1}\left(\frac{\pi(i)}{n}\right) -
      M_i^{-1}\left(\frac{\pi(i)-1}{n}\right) \right)} \\
      & \leq & C_2 \norm{x}_{\Sigma M^*_i},
    \end{eqnarray*}
  where $C_1,C_2>0$ are absolute constants.
\end{thrm}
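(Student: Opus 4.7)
The plan is to reduce the claim to Theorem \ref{THM Erzeugung von Musielak-Orlicz-Normen ueber Permutationen} by exhibiting an explicit matrix $y=(y_{ij})_{i,j=1}^n$ whose entries produce exactly the combinatorial expression in the statement. The natural choice, obtained by solving $M_i^{-1}(k/n) = \frac{1}{n}\sum_{j=1}^{k} y_{ij}$ for $y_{ij}$, is
$$ y_{ij} := n\Big(M_i^{-1}(j/n) - M_i^{-1}((j-1)/n)\Big), \qquad i,j = 1,\ldots,n, $$
with the convention $M_i^{-1}(0)=0$. With this $y$, the product $x_i y_{i\pi(i)}$ coincides with the factor appearing under the maximum, so once the hypotheses of Theorem \ref{THM Erzeugung von Musielak-Orlicz-Normen ueber Permutationen} are checked, one may read off absolute constants $C_1=1/6$ and $C_2=2$.

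Two hypotheses require verification. First, the monotonicity $y_{i1}\geq\ldots\geq y_{in}$: each Orlicz function $M_i$ is convex with $M_i(0)=0$, so its inverse $M_i^{-1}$ is strictly increasing and concave on its range, which forces the successive increments $M_i^{-1}(j/n)-M_i^{-1}((j-1)/n)$ to be positive and nonincreasing in $j$. Second, the interpolation condition (\ref{EQU_Mi_2}) follows from a telescoping computation:
$$ \frac{1}{n}\sum_{j=1}^{k} y_{ij} \;=\; \sum_{j=1}^{k}\Big(M_i^{-1}(j/n) - M_i^{-1}((j-1)/n)\Big) \;=\; M_i^{-1}(k/n), $$
so that $M_i\big(\frac{1}{n}\sum_{j=1}^{k} y_{ij}\big) = k/n$ for $k=1,\ldots,n$, as required.

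I do not anticipate a serious obstacle. The only subtle point is to notice that Theorem \ref{THM Erzeugung von Musielak-Orlicz-Normen ueber Permutationen} is valid for \emph{any} Orlicz functions satisfying (\ref{EQU_Mi_2}), and not merely for the piecewise linear interpolants canonically determined by $y$. Inspecting the proof of Lemma \ref{LEM_MusielakOrlicz} confirms this, as only the prescribed values of $M_i$ at the breakpoints $\frac{1}{n}\sum_{j=1}^{k} y_{ij}$, together with the convexity and strict monotonicity of $M_i$, are ever invoked. Once this is recognised, Theorem \ref{THM Erzeugung von Musielak-Orlicz-Normen ueber Permutationen} applies verbatim to the Orlicz functions $M_1,\ldots,M_n$ given in the hypothesis and yields the claimed two-sided estimate.
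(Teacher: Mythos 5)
Your proof is correct and follows essentially the same route as the paper: both choose $y_{ij}$ to be the increments of $M_i^{-1}$ at the points $j/n$ and reduce the claim to the generation theorem, the only (cosmetic) difference being that you invoke the renormalized version (Theorem \ref{THM Erzeugung von Musielak-Orlicz-Normen ueber Permutationen}) with the factor $n$ built into $y_{ij}$, while the paper applies Theorem \ref{THM_Erzeugung_Musielak_Orlicz_Norm} to the unscaled matrix. Your explicit verification of the monotonicity and interpolation hypotheses, and your remark that only the breakpoint values of the $M_i$ are ever used, are details the paper leaves implicit.
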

\begin{proof}
  Let's consider an Orlicz function $M_i$ for a fixed $i\in \{1,\ldots,n\}$. We approximate this function by a function which is affine 
  between the given values $\frac{1}{n}, \frac{2}{n}, \ldots, \frac{n-1}{n},1$. The appropriate inverse images of the defining values are
    $$
      M_i^{-1} \left( \frac{j}{n} \right), ~j=1,\ldots,n.
    $$
  Now we choose
    $$
      y_{ij} = M_i^{-1}\left(\frac{j}{n}\right) -
      M_i^{-1}\left(\frac{j-1}{n}\right), ~j=1,\ldots,n.
    $$
  The vector $(y_{ij})_{j=1}^n\in\R^n$ generates the Orlicz function $M_i$ in the 'classical sense'. The matrix
  $y=(y_{ij})_{i,j=1}^n$ fulfills the conditions of Theorem \ref{THM_Erzeugung_Musielak_Orlicz_Norm}. Using 
  Theorem \ref{THM_Erzeugung_Musielak_Orlicz_Norm}, we finish the proof.
\end{proof}

Notice that using $M_i^*$, $i=1,\ldots,n$ to define the matrix $y=(y_{ij})_{i,j=1}^n$ yields the Musielak-Orlicz norm $\norm{\cdot}_{\Sigma M_i}$.

\section{Approximation of Musielak-Orlicz Norms}

It turned out to be useful to approximate Orlicz norms by a different norm and work with this expressions instead (see \cite{key-GLSW}). We will provide a corresponding result for Musielak-Orlicz norms.

Let $n,N\in\N$ with $n\leq N$. For a matrix $a\in\R^{n\times N}$ with $a_{i1} \geq \ldots \geq a_{iN}>0$, $i=1,\ldots,n$, 
we define a norm on $\R^n$ by
  $$
    \norm{x}_a = \max_{\sum\limits_{i=1}^n \ell_i\leq N} \sum_{i=1}^n \left( \sum_{j=1}^{\ell_i}a_{ij} \right) \abs{x_i},~x\in\R^n.
  $$
We will show that this norm is equivalent to a Musielak-Orlicz norm, which generalizes Lemma 2.4 in \cite{key-K-S2}.  

\begin{lem}
  Let $n,N\in\N$ and $n\leq N$. Furthermore, let $a\in\R^{n\times N}$ such that $a_{i,1}\geq \ldots \geq a_{i,N}>0$ and 
  $ \sum_{j=1}^N a_{i,j}=1$ for all $i=1,\ldots,n$. Let $M_i$, $i=1,\ldots,n$ be Orlicz functions such that for all $m=1,\ldots,N$
    \begin{equation}\label{EQU sum Mi star}
      M_i^{*}\left( \sum_{j=1}^{m}a_{i,j} \right)=\frac{m}{N}.
    \end{equation}
  Then, for all $x\in\R^n$,
    $$
      \frac{1}{2}\norm{x}_a \leq \norm{x}_{\Sigma M_i} \leq 2 \norm{x}_a.
    $$  
\end{lem}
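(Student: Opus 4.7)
The plan is to realize $\norm{\cdot}_a$ as a dual norm and then to adapt the structural argument of Lemma~\ref{LEM_MusielakOrlicz} to this rectangular setting. By definition,
$$
  \norm{x}_a = \sup_{z \in B} \skp{x,z},
$$
where $B$ is the symmetric convex body
$$
  B = \hbox{convexhull}\left\{ \left(\e_i\sum_{j=1}^{\ell_i} a_{ij}\right)_{i=1}^n : \sum_{i=1}^n \ell_i \leq N,\, \e_i = \pm 1 \right\};
$$
thus $\norm{\cdot}_a$ is the norm dual to the one having $B$ as its unit ball. The task therefore reduces to sandwiching $B$ between two scalings of the Musielak--Orlicz ball $B_{\Sigma M_i^*}$, after which one dualizes and invokes $\norm{\cdot}_{\Sigma M_i} \leq \norm{\cdot}_{(\Sigma M_i^*)^*} \leq 2\norm{\cdot}_{\Sigma M_i}$ from the Preliminaries.

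The inclusion $B \subset B_{\Sigma M_i^*}$ is immediate from hypothesis~(\ref{EQU sum Mi star}): for any generator $z = (\e_i \sum_{j=1}^{\ell_i} a_{ij})_{i=1}^n$ one has
$$
  \sum_{i=1}^n M_i^*(\abs{z_i}) = \sum_{i=1}^n \frac{\ell_i}{N} \leq 1,
$$
and convexity closes the argument. For the reverse inclusion I would imitate the disassembly in Lemma~\ref{LEM_MusielakOrlicz}: given $z \in B_{\Sigma M_i^*}$ with nonnegative entries, split the coordinates into $I = \{i : M_i^*(z_i) \leq 1/N\}$ and $J = \{i : M_i^*(z_i) > 1/N\}$. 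On $I$, the bound $M_i^*(z_i) \leq 1/N = M_i^*(a_{i,1})$ forces $z_i \leq a_{i,1}$, so that $z_I$ is dominated by the generator $(a_{i,1})_{i=1}^n$, which corresponds to the choice $\ell_i \equiv 1$; this is precisely where the hypothesis $n \leq N$ enters, as it is what makes $\ell_i \equiv 1$ admissible. On $J$, one selects $k_i \geq 1$ with $k_i/N \leq M_i^*(z_i) \leq (k_i+1)/N$, obtains $\sum_{i\in J} k_i \leq N$ by summation, and then uses the monotonicity $a_{i,k_i+1} \leq \sum_{j=1}^{k_i} a_{ij}$ to show that $z_J$ is coordinatewise dominated by twice the generator $\left(\sum_{j=1}^{k_i} a_{ij}\right)_{i\in J}$.

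Assembling these two halves yields $B \subset B_{\Sigma M_i^*} \subset 3B$. Passing to dual norms translates these set inclusions into $\norm{x}_a \leq \norm{x}_{(\Sigma M_i^*)^*} \leq 3\norm{x}_a$, and chaining with the Preliminaries bound $\norm{x}_{\Sigma M_i} \leq \norm{x}_{(\Sigma M_i^*)^*} \leq 2\norm{x}_{\Sigma M_i}$ produces the desired equivalence $\norm{x}_{\Sigma M_i} \sim \norm{x}_a$ with absolute constants. The main obstacle is the doubling step on $J$: one relies essentially on the rows of $a$ being decreasing in order to guarantee $a_{i,k_i+1} \leq \sum_{j=1}^{k_i} a_{ij}$. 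Squeezing the absolute constants all the way down to the precise values $1/2$ and $2$ claimed in the lemma may require slightly sharper bookkeeping than the factor $3$ produced by this direct transcription of Lemma~\ref{LEM_MusielakOrlicz}.
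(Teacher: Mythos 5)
Your proposal is correct in outline and follows the same strategy as the paper: identify $\norm{\cdot}_a$ as the support function of the body $B=(B_a)^*$, sandwich that body between scalings of $B_{\Sigma M_i^*}$, and dualize using the Preliminaries bound $\norm{\cdot}_{\Sigma M_i}\leq \norm{\cdot}_{(\Sigma M_i^*)^*}\leq 2\norm{\cdot}_{\Sigma M_i}$. The easy inclusion $B\subset B_{\Sigma M_i^*}$ is word-for-word the paper's argument. Where you diverge is the hard inclusion: your transcription of the $I/J$ disassembly from Lemma~\ref{LEM_MusielakOrlicz} yields $B_{\Sigma M_i^*}\subset 3B$ and hence only $\frac{1}{2}\norm{x}_a\leq\norm{x}_{\Sigma M_i}\leq 3\norm{x}_a$, whereas the lemma asserts the constant $2$. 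The paper gets $B_{\Sigma M_i^*}\subseteq 2(B_a)^*$ by a cleaner two-term majorization that needs no case split: for $x\in B_{\Sigma M_i^*}$ with $x_i\geq 0$, take for each $i$ the largest $\ell_i\geq 0$ with $\sum_{j=1}^{\ell_i}a_{i,j}\leq x_i$. Then
$$
1\geq\sum_{i=1}^n M_i^*(x_i)\geq\sum_{i=1}^n M_i^*\Bigl(\sum_{j=1}^{\ell_i}a_{i,j}\Bigr)=\sum_{i=1}^n\frac{\ell_i}{N}
$$
shows $\sum_i\ell_i\leq N$, so $\bigl(\sum_{j=1}^{\ell_i}a_{i,j}\bigr)_{i=1}^n\in(B_a)^*$; and maximality of $\ell_i$ together with monotonicity of the rows gives $x_i\leq\sum_{j=1}^{\ell_i}a_{i,j}+a_{i,\ell_i+1}\leq\sum_{j=1}^{\ell_i}a_{i,j}+a_{i,1}$, where $(a_{i,1})_{i=1}^n\in(B_a)^*$ is the $\ell_i\equiv 1$ generator (this is where $n\leq N$ enters, exactly as you observed). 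Thus $x$ is coordinatewise dominated by the sum of two elements of $(B_a)^*$, hence $x\in 2(B_a)^*$, and the doubling on $J$ --- the sole source of your extra factor --- disappears. With this replacement your argument delivers the stated constants; as written it proves the equivalence only with $3$ in place of $2$ in one direction, which you correctly flagged.
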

\begin{proof}
  We start with the second inequality. Let $|||\cdot|||$ be the dual norm of $\norm{\cdot}_{\Sigma M^*_i}$. Then, for all $x\in\R^n$,
    $$
      \norm{x}_{\Sigma M_i} \leq |||x||| \leq 2 \norm{x}_{\Sigma M_i}.
    $$
  Now, consider $x\in\R^n$ with $x_1 \geq \ldots \geq x_n > 0$ and $\sum_{i=1}^n M^*_i(x_i)=1$, {\it i.e.}, $x\in B_{\Sigma M^*_i}$. 
  For each $i=1,\ldots,n$ there exist $\ell_i\in\{1,\ldots,N\}$ so that 
    \begin{equation} \label{EQU x_i a_ij}
      \sum_{j=1}^{\ell_i} a_{i,j} \leq x_i \leq \sum_{j=1}^{\ell_i+1} a_{i,j}.
    \end{equation}
  Since for each $i=1,\ldots,n$ the sequence $a_{i,j}$ is arranged in a decreasing order
    $$
      x_i \leq \sum_{j=1}^{\ell_i} a_{i,j} + a_{i,\ell_i+1} \leq \sum_{j=1}^{\ell_i} a_{i,j} + a_{i,1}. 
    $$ 
  We are going to prove that $(a_{i,1})_{i=1}^n$ and $\left( \sum_{j=1}^{\ell_i} a_{i,j} \right)_{i=1}^n$ are in $(B_a)^*$, 
  because then $x\in 2(B_{a})^*$ and therefore $B_{\Sigma M^*_i} \subseteq 2 (B_{a})^*$, where we denote by $B_a$ the closed unit ball with respect to the norm $\norm{\cdot}_a$. We have
    $$
      (B_{a})^* = \left\{ y \in\R^n | \forall x\in B_{a}: \skp{x,y} \leq 1 \right\}.
    $$
  Let $y \in B_{a}$, {\it i.e.},
    $$
      \max_{\sum\limits_{i=1}^n \ell_i\leq N} \sum_{i=1}^n \left( \sum_{j=1}^{\ell_i}a_{i,j} \right) \abs{y_i} \leq 1.
    $$
  Define $\tilde \ell_i=1$ for all $i=1,\ldots, n$. Then, $\sum_{i=1}^n \tilde \ell_i \leq N$ and therefore
    $$
      \skp{(a_{i,1})_{i=1}^n,y} = \sum_{i=1}^n \left(\sum_{j=1}^{\tilde \ell_i}a_{i,j}\right)y_i  
      \leq \max_{\sum\limits_{i=1}^n \ell_i\leq N} \sum_{i=1}^n \left( \sum_{j=1}^{\ell_i}a_{i,j} \right) \abs{y_i} \leq 1. 
    $$
  Thus, $(a_{i,1})_{i=1}^n \in (B_{a})^*$. Furthermore, by (\ref{EQU x_i a_ij})
    $$
      1 = \sum_{i=1}^n M^*_i(x_i) \geq \sum_{i=1}^n M^*_i\left(\sum_{j=1}^{\ell_i} a_{i,j}\right) = \sum_{i=1}^n \frac{\ell_i}{N},
    $$
  and therefore
    $$
      \sum_{i=1}^n \ell_i \leq N.
    $$ 
  Hence
    $$
      \skp{\left( \sum_{j=1}^{\ell_i} a_{i,j} \right)_{i=1}^n,y} 
      \leq \max_{\sum\limits_{i=1}^n \ell_i\leq N} \sum_{i=1}^n \left( \sum_{j=1}^{\ell_i}a_{i,j} \right) \abs{y_i} \leq 1. 
    $$
  So we have
    $$
      \left( \sum_{j=1}^{\ell_i} a_{i,j} \right)_{i=1}^n \in (B_{a})^*,
    $$
  and thus, $B_{\Sigma M^*_i} \subseteq 2 (B_{a})^*$. Hence,
    $$
      \frac{1}{2} \norm{x}_{\Sigma M_i} \leq \frac{1}{2} |||x||| \leq \norm{x}_a.
    $$
 Let us now prove the first inequality. Notice that 
   $$
     (B_a)^* = \hbox{convexhull} \left\{ \left(\e_i \sum_{j=1}^{k_i} a_{i,j} \right)_{i=1}^n \,:\, \sum_{i=1}^n k_i \leq N
      , \e_i=\pm 1, i=1,\ldots,n \right\}.
   $$
Hence, from equation (\ref{EQU sum Mi star}) it follows that
  $$
    \sum_{i=1}^n M_i^* \left( \abs{\e_i \sum_{j=1}^{k_i} a_{i,j}} \right) =  \sum_{i=1}^n \frac{k_i}{N} \leq 1,
  $$
since $\sum_{i=1}^n k_i \leq N$. Therefore, $(B_a)^* \subset B_{\Sigma M_i^*}$ and by duality, $B_{|||\cdot|||} = B_{(\Sigma M_i^*)^*} \subset B_a$. Since $|||\cdot||| \leq 2 \norm{\cdot}_{\Sigma M_i}$, we obtain for any $x\in\R^n$
  $$
    \frac{1}{2} \norm{x}_a \leq \norm{x}_{\Sigma M_i}.
  $$
Altogether this yields
  $$
     \frac{1}{2}\norm{x}_a \leq \norm{x}_{\Sigma M_i} \leq 2 \norm{x}_a,
  $$
for all $x\in\R^n$.      
\end{proof}  

Again, the condition $ \sum_{j=1}^N a_{i,j}=1$ is just a matter of normalization so we obtain normalized Orlicz functions and can be omitted. \\

%{\bf Acknowledgement.} First of all, I would like to thank the anonymous referee for helpful comments that improved the quality of this work. Since part of this work was done while I visited the Fields Institute for Research in Mathematical Sciences in Toronto in the framework of the ``Thematic Program on Asymptotic Geometric Analysis'', I am also grateful to the organizers for inviting me.
%of the text but precede the references.

\bibliographystyle{amsplain}

\end{document}